\newtheorem{theorem}{Theorem}[section]
\newtheorem{lemma}[theorem]{Lemma}
\newtheorem{proposition}[theorem]{Proposition}
\newtheorem{corollary}[theorem]{Corollary}
\newtheorem*{property-P}{Property P}
\theoremstyle{definition}
\newtheorem{definition}[theorem]{Definition}
\renewcommand{\to}{\longrightarrow}
\newcommand{\Ac}{\ensuremath{\mathcal{A}}}
\newcommand{\Top}{\ensuremath{\mathsf{Top}}}
\newcommand{\Xc}{\ensuremath{\mathcal{X}}}
\newcommand{\RG}{\ensuremath{\mathsf{RG}}}
\newcommand{\Set}{\ensuremath{\mathsf{Set}}}
\newcommand{\cod}{\ensuremath{\mathrm{cod\,}}}
\newcommand{\dom}{\ensuremath{\mathrm{dom}}}
\newcommand{\Ext}{\ensuremath{\mathsf{Ext}}}
\newcommand{\Reg}{\ensuremath{\mathsf{Reg}}}
\newcommand{\Mon}{\ensuremath{\mathsf{Mon}}}
\newcommand{\Pt}{\ensuremath{\mathsf{Pt}}}
\newcommand{\DiscFib}{\ensuremath{\mathsf{DiscFib}}}
\newbox\skewpullbackbox
\newbox\ksewpullbackbox
\newbox\pullbackbox
\def\pullback{\copy\pullbackbox}
\newbox\pushoutbox
\begin{document}

\title{Effective descent morphisms of regular epimorphisms}

\author{Tomas Everaert}
  \address{
Vakgroep Wiskunde \\
Vrije Universiteit Brussel \\
 Department of Mathematics  \\
 Pleinlaan 2\\
1050 Brussel \\
 Belgium.
 }

\email{teveraer@vub.ac.be}

 \date{\today}

\maketitle

\begin{abstract}
Let $\Ac$ be a regular category with pushouts of regular epimorphisms by regular epimorphism and $\Reg(\Ac)$ the category of regular epimorphisms in $\Ac$. We prove that every regular epimorphism in $\Reg(\Ac)$ is an effective descent morphism if, and only if, $\Reg(\Ac)$ is a regular category. Then, moreover, every regular epimorphism in $\Ac$ is an effective descent morphism. This is the case, for instance, when $\Ac$ is either exact Goursat, or ideal determined, or is a category of topological Mal'tsev algebras, or is the category of $n$-fold regular epimorphisms in any of the three previous cases, for any $n\geq 1$.
\end{abstract}
\pagestyle{myheadings}{\markboth{TOMAS EVERAERT},\markright{}}

\section{Introduction}

A useful way of weakening the notion of Barr exactness, for a regular category, is to require that every regular epimorphism is an effective descent morphism, which assures the effectiveness of a certain class of equivalence relations, rather than of all. This weaker condition turns out to be strong enough for many purposes: indeed, the authors of \cite{Janelidze-Sobral-Goursat} had good reasons to say that a regular category satisfying this condition is ``almost Barr exact''!

In the present article, we are interested in the category $\Reg(\Ac)$ of regular epimorphisms in a regular category $\Ac$. This category is usually not exact. For instance, if $\Ac$ is a non-trivial abelian category, then $\Reg(\Ac)$ is (equivalent to) the category of short exact sequences in $\Ac$, which is well known not to be abelian, while it is obviously additive---hence it can not be exact by ``Tierney's equation''. However, there are many examples of categories $\Ac$ for which every regular epimorphism in $\Reg(\Ac)$ is an effective descent morphism.  Indeed, this is the case, for instance,  for any abelian category and, much more generally, for any exact Goursat category $\Ac$, as was shown in  \cite{Janelidze-Sobral-Goursat}. It was pointed out in \cite{Janelidze-Sobral-Goursat}, however, that the exact Goursat condition is most likely too strong, even if $\Ac$ is assumed to be a variety. This is confirmed in the present article.  In particular, we show, for a regular category $\Ac$ with pushouts of regular epimorphisms by regular epimorphisms that, in order to have that every regular epimorphism in $\Reg(\Ac)$ is an effective descent morphism, it is both necessary and sufficient  that also $\Reg(\Ac)$ is a regular category. This condition turns out to be satisfied not only in the exact Goursat case, but also when $\Ac$ is ideal determined, or is a category of topological Mal'tsev algebras, or is the category of $n$-fold regular epimorphisms in any of the three previous cases, for any $n\geq 1$. In particular, we find that in each of these cases the category $\Ac$ is itself ``almost exact''.

Let us begin by recalling the definition of an effective descent morphism---for instance from \cite{Janelidze-Sobral-Tholen}, but note that the monadic description recalled here goes back to B\'enabou and Roubaud's article \cite{Benabou-Roubaud}; for the reformulation in terms of discrete fibrations, see also \cite{Janelidze-Tholen2}. 

Let $\Ac$ be a category with pullbacks. If $B$ is an object of $\Ac$, then we write $(\Ac\downarrow B)$ for the slice category over $B$. 
If $p\colon E\to B$ is a morphism in $\Ac$, we write $p^*\colon (\Ac\downarrow B)\to (\Ac\downarrow E)$ for the induced ``change of base'' functor given by pulling back along $p$.

\begin{definition}
An effective (global) descent morphism in a category with pullbacks $\Ac$ is a morphism $p\colon E\to B$ such that $p^*\colon (\Ac\downarrow B)\to (\Ac\downarrow E)$ is monadic.
\end{definition}
Note that a left adjoint for $p^*\colon (\Ac\downarrow B)\to (\Ac\downarrow E)$ exists for any morphism $p\colon E\to B$, and is given by composition with $p$. We denote it $\Sigma_p$, and write $T^p=p^*\circ \Sigma_p$ for the corresponding monad on $(\Ac\downarrow E)$. Writing $(\Ac\downarrow E)^{T^p}$ for the corresponding category of (Eilenberg-Moore) algebras, we obtain the following  commutative triangle of functors, where $U^{T^p}$ is the forgetful functor and $K^{T^p}$ the comparison functor:
\[
\xymatrix{
(\Ac\downarrow B) \ar[d]_{K^{T^p}} \ar[r]^{p^*} & (\Ac\downarrow E)\\
(\Ac\downarrow E)^{T^p} \ar[ru]_{U^{T^p}} &}
\]
Thus $p\colon E\to B$ is an effective descent morphism if and only if $K^{T^p}$ is a category equivalence. Note that when $K^{T^p}$ is merely full and faithful, one says that $p$ is a \emph{descent morphism}.

There is an equivalent way of describing the above diagram, via discrete fibrations.  Recall that a discrete fibration of equivalence relations in $\Ac$ is a (downward) morphism
\[
\xymatrix{
S \ar@{}[rd]|<<<{\pullback} \ar@<0.5 ex>[r] \ar@<-0.5 ex>[r] \ar[d] & A \ar[d]\\
R \ar@<0.5 ex>[r] \ar@<-0.5 ex>[r] & E }
\]
of equivalence relations, such that the commutative square involving the second projections (hence, also the square involving the first projections) is a pullback. Let $p\colon E\to B$ be a morphism in $\Ac$. Write $Eq(p)$ for the equivalence relation $\xymatrix{E\times_B E\ar@<0.5 ex>[r] \ar@<-0.5 ex>[r] & E}$ (i.e. the kernel pair of $p$) and $\DiscFib (Eq(p))$ for the category of discrete fibrations over $Eq(p)$, with the obvious morphisms. It was proved in \cite{Janelidze-Tholen2} (but see also \cite{Janelidze-Sobral-Tholen}) that for any $p\colon E\to B$ the category of algebras $(\Ac\downarrow E)^{T^p}$ is equivalent to the category $\DiscFib (Eq(p))$ of discrete fibrations over the kernel pair of $p$, and the commutative diagram above becomes:  
\[
\xymatrix{
(\Ac\downarrow B) \ar[d]_{K^{p}} \ar[r]^{p^*} & (\Ac\downarrow E)\\
\DiscFib (Eq(p)) \ar[ru]_{U^{p}} &}
\]
Here $K^p$ sends a morphism $f\colon A\to B$ to the discrete fibration displayed in the left hand side of the diagram below, obtained by first pulling back $f$ along $p$, and next taking kernel pairs:
\[
\xymatrix{
E\times_B E\times_B A \ar@{}[rd]|<<{\pullback} \ar[d] \ar@<0.5 ex>[r] \ar@<-0.5 ex>[r] & E\times_BA \ar@{}[rd]|<<{\pullback} \ar[r] \ar[d] & A \ar[d]^f\\
E\times_B E \ar@<0.5 ex>[r] \ar@<-0.5 ex>[r] & E \ar[r]_p & B}
\]
and $U^p$ is the obvious forgetful functor.  

We shall need the following lemma, which can be found, for instance, in \cite{Janelidze-Sobral-Tholen}.  Recall that an equivalence relation in a category is \emph{effective} if it is the kernel pair of some morphism. A category is \emph{regular} if it is finitely complete, with pullback-stable regular epimorphisms and coequalisers of effective equivalence relations. It is \emph{Barr exact} if, moreover, every internal equivalence relation is effective.

\begin{lemma}\begin{enumerate}\label{known}
\item
In a finitely complete category, a descent morphism is the same as a pullback-stable regular epimorphism.
\item
In a regular category, a regular epimorphism $p\colon E\to B$ is an effective descent morphism if and only if for any discrete fibration 
\[
\xymatrix{
R  \ar@{}[rd]|<<<{\pullback} \ar[d] \ar@<0.5 ex>[r]^{\pi_1} \ar@<-0.5 ex>[r]_{\pi_2} & A \ar[d] \\
E\times_B E \ar@<0.5 ex>[r] \ar@<-0.5 ex>[r] & E}
\]
over the kernel pair of $p$, the equivalence relation $(\pi_1, \pi_2)$ is effective.
\end{enumerate}
\end{lemma}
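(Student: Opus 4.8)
The plan is to prove the two parts separately, deriving part~(2) from part~(1) together with the discrete-fibration reformulation recalled above. For part~(1), I would analyse the counit of the adjunction $\Sigma_p\dashv p^*$. Its component at an object $f\colon A\to B$ of $(\Ac\downarrow B)$ is precisely the projection $E\times_B A\to A$, that is, the pullback of $p$ along $f$. A standard fact about comparison functors says that $K^{T^p}$ is faithful exactly when every counit component is an epimorphism; since each component is a pullback of $p$, and taking $f=\mathrm{id}_B$ recovers $p$ itself, this holds for all $f$ if and only if every pullback of $p$ is epic, i.e.\ $p$ is a universal epimorphism. Upgrading faithfulness to fullness replaces ``epimorphism'' by ``regular epimorphism stably under pullback along $p$'', and the same reduction shows that $K^{T^p}$ is full and faithful---equivalently $p$ is a descent morphism---if and only if $p$ is a pullback-stable regular epimorphism.

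For part~(2), the first observation is that in a regular category regular epimorphisms are pullback-stable, so by part~(1) every regular epimorphism $p$ is already a descent morphism; hence $K^p$ is automatically full and faithful, and $p$ is an effective descent morphism if and only if $K^p$ is essentially surjective onto $\DiscFib(Eq(p))$. The strategy is therefore to identify the essential image of $K^p$. On one side, for $f\colon A'\to B$ the discrete fibration $K^p(f)$ has top relation $E\times_B E\times_B A'\rightrightarrows E\times_B A'$, which is visibly the kernel pair of the projection $E\times_B A'\to A'$; thus every object in the image of $K^p$ has an effective top relation, a property invariant under isomorphism of discrete fibrations. On the other side, given an arbitrary discrete fibration $R\rightrightarrows A$ over $Eq(p)$ with structural map $d\colon A\to E$, I would note that $pd$ coequalizes $(\pi_1,\pi_2)$: the pullback condition identifies the two composites $R\to A\xrightarrow{d}E$ with the two projections $R\to E\times_B E\to E$, and these become equal after composing with $p$. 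Hence, as soon as $R=(\pi_1,\pi_2)$ is effective with coequaliser $q\colon A\to A'$, the map $pd$ factors as $fq$ for a unique $f\colon A'\to B$, yielding a canonical comparison $h=\langle d,q\rangle\colon A\to E\times_B A'$ over $A'$.

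The main obstacle is the final verification that this comparison $h$ is an isomorphism, for this is exactly what exhibits the given discrete fibration as $K^p(f)$ and completes the reconstruction. The clean way to handle it is to pull back along $q$ rather than argue directly. Since $q$ is a regular epimorphism, $q^*A\cong Eq(q)\cong R$ by effectiveness, while $q^*(E\times_B A')\cong E\times_B A$ (using that $E\times_B A'\to A'$ is the pullback of $p$ along $f$), and the pullback condition of the discrete fibration is precisely what identifies this last object with $R$ compatibly with the structural maps to $A$; under these identifications $q^*h$ becomes an isomorphism. It then remains to invoke that pulling back along a regular epimorphism in a regular category is conservative---which follows from image factorisation together with the orthogonality of strong epimorphisms to monomorphisms---to conclude that $h$ itself is an isomorphism. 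Combining the two inclusions, the essential image of $K^p$ consists exactly of the discrete fibrations whose top relation is effective, so $K^p$ is essentially surjective, equivalently $p$ is an effective descent morphism, if and only if every discrete fibration over $Eq(p)$ has effective top relation, which is the assertion of part~(2).
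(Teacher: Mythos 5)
The paper does not actually prove this lemma---it is quoted as known, with a pointer to Janelidze--Sobral--Tholen---so the only meaningful comparison is with the standard literature proof, and that is, in substance, exactly what you have reconstructed: your argument is correct. Two points deserve sharpening. In part (1), the ``standard fact'' you invoke is asymmetric: faithfulness of $K^{T^p}$ is indeed equivalent to all counit components being epimorphisms, but full faithfulness is in general equivalent to each counit component $\varepsilon_f$ being the coequalizer of its canonical pair $(\varepsilon_{\Sigma_p p^*f},\,\Sigma_p p^*\varepsilon_f)$, not merely a regular epimorphism; your ``upgrading'' step is legitimate here only because that canonical pair is precisely the kernel pair of the projection $E\times_B A\to A$ (coequalizers in slices being coequalizers in $\Ac$, as the paper notes), and in a finitely complete category a morphism is a regular epimorphism iff it is the coequalizer of its kernel pair---this should be said explicitly rather than waved at. In part (2), all the steps check out: $pd$ coequalizes $(\pi_1,\pi_2)$ because the back face of the discrete fibration gives $d\pi_i=p_i\circ r$; the coequalizer $q$ exists by the paper's definition of regular category, and the identification $Eq(q)=R$---which you use silently in ``$q^*A\cong Eq(q)\cong R$''---follows from effectiveness by the usual two-inclusion argument ($R\leq Eq(q)$ since $q\pi_1=q\pi_2$, and $Eq(q)\leq Eq(g)=R$ since the morphism $g$ presenting $R$ as a kernel pair factors through $q$); the pullback condition of the discrete fibration identifies $R$ with $E\times_B A$ compatibly, so $q^*h$ is the canonical isomorphism; and conservativity of $q^*$ for $q$ a regular epimorphism holds in any regular category, either by your strong-epi-plus-mono argument or by quoting part (1) together with the observation that the forgetful functor from $\DiscFib(Eq(q))$ reflects isomorphisms. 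So the proposal is a correct, self-contained proof along the lines of the cited source, filling in what the paper leaves as a reference.
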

Note that the second part of this lemma immediately implies the well-known fact that in an exact category every regular epimorphism is an effective descent morphism.

\section{Main results}
Throughout this section, we shall assume that $\Ac$ is a regular category with pushouts of regular epimorphisms by regular epimorphisms. Then, in particular, every morphism in $\Ac$ factors (essentially uniquely) as a regular epimorphism followed by a monomorphism.  Let us denote by $\Reg(\Ac)$ the full subcategory of the arrow category $\Ac^2$ with as objects all regular epimorphisms in $\Ac$. Thus, a morphism $(a\colon A'\to A)\to (b\colon B'\to B)$ is a pair $(f',f)$ of morphisms in $\Ac$ such that $b\circ f'=f\circ a$. Like $\Ac$, the category $\Reg(\Ac)$ is finitely complete:  limits are given by the regular epi part of the regular epi-mono factorisation of the degreewise pullback. An effective equivalence relation in $\Reg(\Ac)$ is the same as a graph
\[
\xymatrix{
R' \ar@<0.5 ex>[r]^{\pi_1'}  \ar@<-0.5 ex>[r]_{\pi_2'} \ar[d]_r & E' \ar[d]^e\\
R \ar@<0.5 ex>[r]^{\pi_1}  \ar@<-0.5 ex>[r]_{\pi_2}  & E}
\]
in $\Reg(\Ac)$ such that $(\pi_1',\pi_2')$ is an effective equivalence relation in $\Ac$ and $\pi_1$ and $\pi_2$ are jointly monic. A regular epimorphism in $\Reg(\Ac)$ is the same as a pushout square of regular epimorphisms in $\Ac$. Notice that a regular category $\Ac$ admits pushouts of regular epimorphisms by regular epimorphisms if and only if $\Reg(\Ac)$ admits coequalisers of effective equivalence relations.

By Lemma \ref{known}.1, any effective descent morphism in $\Reg(\Ac)$ is necessarily a regular epimorphism. We would like to know when we have the converse. Certainly, this can only happen if every regular epimorphism \emph{in $\Ac$} is an effective descent morphism, since for any regular epimorphism $p
\colon E\to B$ in $\Ac$ a category equivalence $K^{(p,p)}\colon (\Ac^2\downarrow 1_B)\to \DiscFib(Eq(p,p))$  will restrict to an equivalence $K^p\colon (\Ac\downarrow B)\to \DiscFib(Eq(p))$. Moreover, by Lemma \ref{known}.1,   $\Reg(\Ac)$ will need to have pullback-stable regular epimorphisms, which means---since it is finitely complete and has coequalisers of effective equivalence relations---that $\Reg(\Ac)$ is regular.

Somewhat surprisingly, perhaps, these conditions turn out to be sufficient. Indeed, we have: 

\begin{theorem}\label{maintheorem}
For a regular category $\Ac$ with pushouts of regular epimorphisms by regular epimorphisms, in which every regular epimorphism is an effective descent morphism, the following conditions are equivalent:
\begin{enumerate}
\item
Every regular epimorphism in $\Reg(\Ac)$ is an effective descent morphism;
\item
$\Reg(\Ac)$ is a regular category.
\end{enumerate}
\end{theorem}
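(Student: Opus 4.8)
The plan is to take condition~(2) as hypothesis and establish~(1), since the converse is already essentially contained in the discussion preceding the statement: if every regular epimorphism in $\Reg(\Ac)$ is an effective descent morphism then, being in particular a descent morphism, it is a pullback-stable regular epimorphism by Lemma~\ref{known}.1, and together with the finite completeness of $\Reg(\Ac)$ and the coequalisers of effective equivalence relations supplied by the pushout hypothesis this makes $\Reg(\Ac)$ regular. For the substantial direction (2)$\Rightarrow$(1) I would fix a regular epimorphism $P=(p',p)\colon\mathbb{E}\to\mathbb{B}$ in $\Reg(\Ac)$---that is, a pushout of regular epimorphisms, so that both $p'=\dom(P)$ and $p=\cod(P)$ are regular epimorphisms in $\Ac$---and invoke Lemma~\ref{known}.2 in the (by hypothesis regular) category $\Reg(\Ac)$: it suffices to show that for each discrete fibration over $Eq(P)$ the top equivalence relation $(\pi_1,\pi_2)$ on its $\mathbb{A}=(a\colon A'\to A)$ is effective in $\Reg(\Ac)$. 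By the description of effective equivalence relations in $\Reg(\Ac)$ recalled above, this splits into two tasks: (i) the domain part $(\pi_1',\pi_2')$ is an effective equivalence relation in $\Ac$, and (ii) the codomain part $(\pi_1,\pi_2)$ is jointly monic.

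For~(i) the key observation is that the domain functor $\dom\colon\Reg(\Ac)\to\Ac$ preserves finite limits: limits in $\Reg(\Ac)$ are the regular-epi part of the regular epi-mono factorisation of the degreewise limit, and passing to that regular-epi part leaves the domain object unchanged, so $\dom$ carries a limit in $\Reg(\Ac)$ to the corresponding degreewise limit of domains. Applying $\dom$ to the given discrete fibration therefore produces a discrete fibration in $\Ac$ over $Eq(\dom(P))=Eq(p')$ with top row $(\pi_1',\pi_2')$. As $p'$ is a regular epimorphism in $\Ac$, hence an effective descent morphism by the standing hypothesis, Lemma~\ref{known}.2 applied inside $\Ac$ yields at once that $(\pi_1',\pi_2')$ is effective.

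The main obstacle is~(ii), because the codomain functor does \emph{not} preserve pullbacks and so the symmetric trick is unavailable for the unprimed part. Here I would instead exploit the full strength of the requirement that the fibration squares be pullbacks \emph{in} $\Reg(\Ac)$. Writing $D=\cod(Eq(P))$, which sits as a subobject of $E\times_B E$, the $\Reg(\Ac)$-pullback of the cartesian square is the regular-epi part of the degreewise pullback; in particular its codomain $R=\cod(\mathbb{R})$ is the image of that degreewise pullback inside $A\times_E D$, so that the comparison morphism $(\pi_2,\rho)\colon R\to A\times_E D$ is a monomorphism, where $\rho\colon R\to D$ is the codomain part of the structure map $\mathbb{R}\to Eq(P)$. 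Commutativity of the two fibration squares then forces $\rho=(f\pi_1,f\pi_2)$, with $f$ the codomain part of $\mathbb{A}\to\mathbb{E}$; that is, the $D$-component is already determined by $(\pi_1,\pi_2)$. Consequently $(\pi_2,\rho)$ factors through $(\pi_1,\pi_2)\colon R\to A\times A$, and since a morphism through which a monomorphism factors is itself a monomorphism, $(\pi_1,\pi_2)$ is jointly monic, which is~(ii).

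With~(i) and~(ii) in hand, the description of effective equivalence relations in $\Reg(\Ac)$ shows that $(\pi_1,\pi_2)$ is effective there, and Lemma~\ref{known}.2---now used in the regular category $\Reg(\Ac)$---concludes that $P$ is an effective descent morphism. I expect the only delicate point to be the bookkeeping in~(ii): one must keep the degreewise (pointwise) pullbacks cleanly separated from the genuine $\Reg(\Ac)$-pullbacks, and it is precisely the image factorisation distinguishing the two that produces the monomorphism $(\pi_2,\rho)$ on which the whole argument turns.
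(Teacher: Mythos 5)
Your proposal is correct and follows essentially the same route as the paper's proof: you reduce via Lemma~\ref{known}.2 applied in the (by hypothesis regular) category $\Reg(\Ac)$, obtain effectiveness of the domain part from Lemma~\ref{known}.2 in $\Ac$ (your observation that $\dom$ preserves finite limits is what the paper phrases as ``the front square is a discrete fibration in $\Ac$''), and derive joint monicity of the codomain projections from the monomorphism supplied by the image factorisation in the $\Reg(\Ac)$-pullback. Your step~(ii) is just an explicit unwinding of the paper's one-line deduction that joint monicity of $(\overline{\pi}_2,g)$ together with joint monicity of the bottom projections forces joint monicity of $(\overline{\pi}_1,\overline{\pi}_2)$.
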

\begin{proof}
We only need to prove that 2. implies 1. so let us assume that $\Reg(\Ac)$ is regular. To prove that condition 1. holds, it suffices, by Lemma \ref{known}.2, to show for every discrete fibration of equivalence relations in $\Reg(\Ac)$, as in the diagram
\begin{equation}\label{dfcube}\vcenter{
\xymatrix{
& S \ar@<0.5 ex>[rr]^{\overline{\pi}_1} \ar@<-0.5 ex>[rr]_{\overline{\pi}_2} \ar@{.>}[dd]_>>>>>g && A \ar[dd]^f \\
S'  \ar@<0.5 ex>[rr]^>>>>>>>>>{\overline{\pi}_1'} \ar@<-0.5 ex>[rr]_>>>>>>>>>{\overline{\pi}_2'}  \ar[dd] \ar[ru] && A' \ar[dd] \ar[ru]\\
& R \ar@<0.5 ex>@{.>}[rr]^<<<<<<<<<{\pi_1}  \ar@<-0.5 ex>@{.>}[rr]_<<<<<<<<<{\pi_2}  && E\\
R' \ar@<0.5 ex>[rr]^{\pi_1'} \ar@<-0.5 ex>[rr]_{\pi_2'}  \ar@{.>}[ru] && E' \ar[ru]}}
\end{equation}
that whenever the bottom equivalence relation is effective, the top one is effective as well. First of all recall that the bottom equivalence relation being effective means that $(\pi_1',\pi_2')$ is an effective equivalence relation in $\Ac$ and $\pi_1$ and $\pi_2$ are jointly monic. The cube being a discrete fibration in $\Reg(\Ac)$ means that the front square is a discrete fibration in $\Ac$ and, in the back square, $\overline{\pi}_2$ and $g$ are jointly monic. 

Since $(\pi_1',\pi_2')$ is the kernel pair of an effective descent morphism, by assumption, we have, by Lemma \ref{known}.2 that $(\overline{\pi}_1',\overline{\pi}_2')$   is an effective equivalence relation in $\Ac$, and it remains to be shown that $\overline{\pi}_1$ and $\overline{\pi}_2$ are jointly monic. However, this follows from the assumption that $\overline{\pi}_2$ and $g$ are jointly monic, as well as $\pi_1$ and $\pi_2$.
\end{proof}

Here's another surprise, or at least it was a surprise to us: the assumption that every regular epimorphism in $\Ac$ is an effective descent morphism is superfluous in the statement of the above theorem---it follows at once from the regularity of $\Reg(\Ac)$! To see this, we will need the following lemma. 

Recall that a diagram
\[
\xymatrix{
A''  \ar@<0.5 ex>[r]^{a_1}  \ar@<-0.5 ex>[r]_{a_2}  & A' \ar[r]^a& A}
\]
is called a \emph{fork} if $a\circ a_1=a\circ a_2$.

\begin{lemma}\label{George}
Let $\Ac$ be a regular category such that $\Reg(\Ac)$ is regular as well. 
Consider a morphism
\begin{equation}\label{forks}\vcenter{
\xymatrix{
E''  \ar@{}[rd]|<<{\pullback}  \ar@<0.5 ex>[r]^>>>>{e_1}  \ar@<-0.5 ex>[r]_>>>>{e_2} \ar[d] & E'  \ar@{}[rd]|<<{\pullback}    \ar[d] \ar[r]^{e} & E \ar[d]\\
B'' \ar@<0.5 ex>[r]^{b_1}  \ar@<-0.5 ex>[r]_{b_2}  & B' \ar[r]_b & B}}
\end{equation}
of forks in $\Ac$ such that the right hand square as well as the left hand squares are pullbacks. Assume, moreover, that the graph $(b_1,b_2)$ (hence also the graph $(e_1,e_2)$) is reflexive.  If $b$ is the coequaliser of $b_1$ and $b_2$, then $e$ is the coequaliser of $e_1$ and $e_2$.
\end{lemma}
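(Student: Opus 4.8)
The plan is to deduce the conclusion from the pullback-stability of regular epimorphisms in the regular category $\Reg(\Ac)$. First I would record two easy reductions. Since the right-hand square is a pullback and $b=\coeq(b_1,b_2)$ is a regular epimorphism, its pullback $e$ is a regular epimorphism as well, so $e\colon E'\to E$ is a legitimate object of $\Reg(\Ac)$; it then suffices to prove that $e=\coeq(e_1,e_2)$. Moreover, reflexivity of $(b_1,b_2)$ forces $b_1$ and $b_2$ to be split, hence regular, epimorphisms, and likewise $e_1$ and $e_2$; thus $b_1\colon B''\to B'$, $b\colon B'\to B$, $e_1\colon E''\to E'$ and $e\colon E'\to E$ are all objects of $\Reg(\Ac)$.

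The key observation is that, for a reflexive pair, the coequaliser fork is the same thing as a pushout square. Concretely, I would show that the commuting square with top edge $b_2$, left edge $b_1$, and both remaining edges $b$ is a pushout: given a cocone $(t_1,t_2)$ on the span $B'\xleftarrow{\,b_1\,}B''\xrightarrow{\,b_2\,}B'$, precomposition with the common section of $(b_1,b_2)$ yields $t_1=t_2$, after which the universal property of $b=\coeq(b_1,b_2)$ supplies the required factorisation. Hence this is a pushout square of regular epimorphisms, that is, the morphism $(b_2,b)\colon b_1\to b$ is a regular epimorphism in $\Reg(\Ac)$. On the other hand, the right-hand pullback square is exactly a morphism $(p',p)\colon e\to b$ in $\Reg(\Ac)$, since $b\circ p'=p\circ e$.

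I would then pull back the regular epimorphism $(b_2,b)\colon b_1\to b$ along $(p',p)\colon e\to b$ inside $\Reg(\Ac)$. Computing the degreewise pullback in $\Ac^2$ gives, at the codomain level, $B'\times_B E=E'$, and, at the domain level, the pullback of $b_2\colon B''\to B'$ along $p'\colon E'\to B'$, which (using $b\circ b_1=b\circ b_2$) is isomorphic to $B''\times_B E=E''$, with structure map the pullback of $b_1$, namely $e_1$. Because $e_1$ is a regular epimorphism, this degreewise pullback already lies in $\Reg(\Ac)$, so it coincides with the pullback in $\Reg(\Ac)$, and the pulled-back morphism is $(e_2,e)\colon e_1\to e$. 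By pullback-stability of regular epimorphisms in the regular category $\Reg(\Ac)$, this is again a regular epimorphism in $\Reg(\Ac)$, i.e.\ the square with top edge $e_2$, left edge $e_1$, and both remaining edges $e$ is a pushout; reading this pushout back as a coequaliser (any $t$ with $t\circ e_1=t\circ e_2$ produces the cocone $(t,t)$, which factors uniquely through $e$) yields $e=\coeq(e_1,e_2)$, as required.

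The main obstacle, and the point deserving the most care, is the translation between coequaliser forks and pushout squares together with the verification that the pullback computed in $\Reg(\Ac)$ agrees with the naive degreewise pullback in $\Ac^2$. The first rests entirely on reflexivity — this is precisely where that hypothesis is used — while the second rests on the fact that $e_1$, being a pullback of the split epimorphism $b_1$, is itself a split, hence regular, epimorphism, so that the regular epi-mono factorisation through which limits of $\Reg(\Ac)$ are formed is trivial and no image need be taken. Once these two points are secured, the pullback-stability of regular epimorphisms in $\Reg(\Ac)$ carries out all of the remaining work.
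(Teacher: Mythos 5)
Your proof is correct and is essentially the paper's own argument: you assemble the data into a square in $\Reg(\Ac)$ whose bottom edge $(b_2,b)\colon b_1\to b$ is a pushout of regular epimorphisms (using the common splitting), observe that the degreewise pullback along $(p',p)\colon e\to b$ is already an object of $\Reg(\Ac)$ since $e_1$ is a split epimorphism, and invoke pullback-stability of regular epimorphisms in $\Reg(\Ac)$ to conclude that the top face is a pushout, hence that $e=\coeq(e_1,e_2)$ by reflexivity. The only difference is presentational---the paper phrases the same argument as a commutative cube being a pullback in $\Reg(\Ac)$, while you spell out the degreewise computation and the pushout--coequaliser translation in more detail.
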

\begin{proof}
The given diagram induces a commutative cube
\[
\xymatrix{
& E' \ar[rr]^{e}  \ar@{.>}[dd] && E \ar[dd] \\
E''  \ar[rr]^>>>>>>>>>{e_2}  \ar[dd] \ar[ru]^{e_1} && E' \ar[dd] \ar[ru]_{e}\\
& B' \ar@{.>}[rr]^<<<<<<<<<{}  && B\\
B'' \ar[rr]_{b_2}  \ar@{.>}[ru]^>>>>>>{b_1} && B' \ar[ru]_b}
\]
in which the bottom square is a pushout since $b$ is the coequaliser of $b_1$ and $b_2$, and because $b_1$ and $b_2$ have a common splitting. Since both front and back squares are pullbacks, and because each of the backward pointing morphisms is a regular epimorphism, the cube is a pullback in $\Reg(\Ac)$. As $\Reg(\Ac)$ was assumed to be regular, it follows that the top square is a pushout as well, so that $e$ is the coequaliser of  $e_1$ and $e_2$, as desired.
\end{proof}

Note that although we do not assume that arbitrary coequalisers exist in $\Ac$, coequalisers in the slice categories $(\Ac\downarrow B)$ are always coequalisers in $\Ac$:  indeed, the existence of binary products implies that the ``domain'' functors $(\Ac\downarrow B)\to \Ac$ have a right adjoint. The previous lemma is then easily seen to imply, for any morphism $p\colon E\to B$, that the  ``change of base''  functor $p^*\colon (\Ac\downarrow B)\to (\Ac\downarrow E)$ preserves coequalisers of reflexive graphs. (In fact, one can easily proof that the lemma is equivalent to this property---see also Remark \ref{admissibility}.) When $p$ is a regular epimorphism, then, moreover, $p^*$ reflects isomorphisms, as follows from Lemma \ref{known}.1. Since $p^*$ has a left adjoint $\Sigma_p$, we can then conclude, via the ``reflexive form'' of Beck's Monadicity Theorem, that $p^*$ is monadic.

Thus we have proved:
\begin{theorem}\label{new}
If $\Ac$ is a regular category such that $\Reg(\Ac)$ is regular as well, then every regular epimorphism in $\Ac$ is an effective descent morphism.
\end{theorem}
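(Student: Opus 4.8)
The plan is to prove that for every regular epimorphism $p \colon E \to B$ the change-of-base functor $p^* \colon (\Ac \downarrow B) \to (\Ac \downarrow E)$ is monadic, by checking the hypotheses of the reflexive form of Beck's Monadicity Theorem. A left adjoint $\Sigma_p$ (composition with $p$) is available for any $p$, so what remains is to verify that $(\Ac \downarrow B)$ has coequalisers of reflexive graphs, that $p^*$ preserves them, and that $p^*$ reflects isomorphisms. I expect the preservation clause to carry the real content; the other two are comparatively formal.

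The existence of reflexive coequalisers is exactly what the standing hypothesis on $\Ac$ provides. A reflexive pair has a common section, so both of its legs are split, hence regular, epimorphisms; and the coequaliser of a reflexive pair is nothing but the pushout of its two legs along one another. Such a pushout of regular epimorphisms exists in $\Ac$ by assumption, and, since the domain functor $(\Ac \downarrow B) \to \Ac$ is a left adjoint (binary products exist), this coequaliser is computed as in $\Ac$. For the reflection of isomorphisms I would appeal to Lemma~\ref{known}.1: $p$ is a regular epimorphism in a regular category, hence a pullback-stable regular epimorphism, hence a descent morphism, which is exactly what yields that $p^*$ reflects isomorphisms.

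The crux is preservation, and here Lemma~\ref{George} is the engine. Starting from a reflexive pair $u_1, u_2$ in $(\Ac \downarrow B)$ with coequaliser $c \colon X' \to X$ in $\Ac$, I would pull the entire fork $X'' \rightrightarrows X' \to X$ back along $p$, taking as vertical legs the projections of the fibre products $E \times_B (-)$. One then checks directly that the right-hand square and both left-hand squares of the resulting morphism of forks are pullbacks, and that the pulled-back pair is again reflexive; Lemma~\ref{George} then delivers that $\mathrm{id}_E \times c$ is the coequaliser of the pulled-back pair, which is to say that $p^*$ preserves the coequaliser of $(u_1,u_2)$. This is the only place where the regularity of $\Reg(\Ac)$ is used: it enters through Lemma~\ref{George}, whose proof recognises the relevant cube as a pullback in $\Reg(\Ac)$ and reads off from regularity that its top face is a pushout.

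Granting these three points, the reflexive monadicity theorem gives that $p^*$ is monadic, so $p$ is an effective descent morphism. I anticipate the main difficulty to be bookkeeping rather than conceptual: confirming that the pullback hypotheses of Lemma~\ref{George} really hold after pulling back along $p$, and keeping the translation between coequalisers in the slices and coequalisers in $\Ac$ clean. It is also worth being careful to invoke the \emph{reflexive} form of Beck's theorem, since it is precisely for reflexive pairs that the pushout hypothesis guarantees the coequalisers we need.
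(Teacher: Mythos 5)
Your proposal is correct and is essentially the paper's own proof: the paper likewise applies the reflexive form of Beck's Monadicity Theorem to $p^*$, using Lemma \ref{George} (via pulling back the fork along $p$) for preservation of reflexive coequalisers, Lemma \ref{known}.1 for reflection of isomorphisms, and the adjunction $\Sigma_p\dashv p^*$. Your added observation that reflexive coequalisers exist because they are pushouts of split (hence regular) epimorphisms just makes explicit how the standing hypothesis on pushouts enters, and is consistent with the paper's remark that coequalisers in slices are coequalisers in $\Ac$.
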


Combining Theorems \ref{maintheorem} and \ref{new} we obtain:
\begin{corollary}\label{corollary}
For a regular category $\Ac$ with pushouts of regular epimorphisms by regular epimorphisms, the following conditions are equivalent:
\begin{enumerate}
\item
Every regular epimorphism in $\Reg(\Ac)$ is an effective descent morphism;
\item
$\Reg(\Ac)$ is a regular category.
\end{enumerate}
\end{corollary}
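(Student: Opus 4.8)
The plan is to assemble the statement from the two theorems already established, treating the two implications separately and keeping track of which standing hypotheses feed into each.

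For the implication $(1)\Rightarrow(2)$, I would reproduce the necessity argument sketched just before Theorem \ref{maintheorem}, which requires nothing beyond Lemma \ref{known}.1. If every regular epimorphism in $\Reg(\Ac)$ is an effective descent morphism, then in particular each such morphism is a descent morphism, hence a pullback-stable regular epimorphism by Lemma \ref{known}.1. Now $\Reg(\Ac)$ is finitely complete, and by the standing hypothesis that $\Ac$ admits pushouts of regular epimorphisms by regular epimorphisms it also has coequalisers of effective equivalence relations; pullback-stability of its regular epimorphisms is then precisely the remaining ingredient for regularity, so $\Reg(\Ac)$ is regular.

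For the implication $(2)\Rightarrow(1)$, the key observation is that the extra hypothesis appearing in Theorem \ref{maintheorem}---that every regular epimorphism in $\Ac$ be an effective descent morphism---is not an independent assumption but a consequence of the regularity of $\Reg(\Ac)$. Accordingly, I would first invoke Theorem \ref{new}: assuming $\Reg(\Ac)$ is regular, every regular epimorphism in $\Ac$ is an effective descent morphism. With this in hand, all hypotheses of Theorem \ref{maintheorem} are met, and its condition (2) holds; the implication $(2)\Rightarrow(1)$ of that theorem then yields (1).

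There is no genuine obstacle remaining once Theorems \ref{maintheorem} and \ref{new} are available---the corollary is essentially a bookkeeping assembly. The only point demanding care is to check that the two hypotheses of the present statement feed correctly into both earlier results: Theorem \ref{new} uses only the regularity of $\Ac$ and of $\Reg(\Ac)$, whereas Theorem \ref{maintheorem} additionally requires the existence of the relevant pushouts, which is among our standing assumptions. The real conceptual content---that regularity of $\Reg(\Ac)$ silently forces effective descent for \emph{all} regular epimorphisms of $\Ac$, thereby rendering the extra hypothesis of Theorem \ref{maintheorem} redundant---is entirely carried by Theorem \ref{new}, so in the corollary itself nothing substantive is left to prove.
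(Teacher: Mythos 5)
Your proposal is correct and follows exactly the paper's intended route: the paper derives the corollary by combining Theorems \ref{maintheorem} and \ref{new}, with the implication $(1)\Rightarrow(2)$ being the necessity argument via Lemma \ref{known}.1 given just before Theorem \ref{maintheorem}, and $(2)\Rightarrow(1)$ obtained by using Theorem \ref{new} to discharge the effective-descent hypothesis of Theorem \ref{maintheorem}. Your bookkeeping of which standing hypotheses feed into each result is accurate, so nothing is missing.
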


\section{Examples}
The results of the previous section suggest to investigate which regular categories  $\Ac$ have the property that $\Reg(\Ac)$ is regular. We have the following examples:

\subsection{Exact Goursat categories}\label{ExGoursat}
Recall from \cite[Theorem 6.8]{Carboni-Kelly-Pedicchio} that a regular category is \emph{Goursat} if and only if for every (downward) morphism
\[
\xymatrix{
R' \ar@<0.5 ex>[r]^{\pi_1'}  \ar@<-0.5 ex>[r]_{\pi_2'} \ar[d]_r & E' \ar[d]^e\\
R \ar@<0.5 ex>[r]^{\pi_1}  \ar@<-0.5 ex>[r]_{\pi_2}  & E}
\]
of relations in $\Ac$ with $r$ and $e$ regular epimorphisms, $(\pi_1,\pi_2)$ is an equivalence relation as soon as so is $(\pi_1',\pi_2')$.  In universal algebra, Goursat varieties are usually called \emph{$3$-permutable varieties}, as they are characterised by the property $RSR=SRS$ for every two congruences $R$ and $S$ on any algebra $A$. Exact Goursat categories are easily seen to admit pushouts of regular epimorphisms by regular epimorphisms. Moreover, given the latter property, an exact category is Goursat if and only if it satisfies the following condition:   
\begin{itemize}
\item[($\mathsf{G}$)]
\emph{For any morphism
\begin{equation}\label{Goursat}\vcenter{
\xymatrix{
R' \ar@<0.5 ex>[r]  \ar@<-0.5 ex>[r] \ar[d]_r & E' \ar[d]^e \ar[r] & B' \ar[d]^b\\
R \ar@<0.5 ex>[r]  \ar@<-0.5 ex>[r]  & E \ar[r] & B}}
\end{equation}
of exact forks (which means that both rows consist of a regular epimorphism together with its kernel pair) with, moreover,  $e$ and $b$ regular epimorphisms, one has that the right hand square is a pushout if and only if $r$ is a regular epimorphism.}
\end{itemize}
Notice that  the ``if'' part is true in any category. In fact, in order to conclude that the right hand square in the diagram is a pushout it is sufficient that $r$ is an epimorphism.

Thus we have that a commutative square of regular epimorphisms in an exact Goursat category $\Ac$ is a regular epimorphism in $\Reg(\Ac)$ if and only if the induced morphism between the kernel pairs is a regular epimorphism. The regularity of $\Reg(\Ac)$ is now easily deduced from that of $\Ac$. Indeed, consider a pullback square
\begin{equation}\label{cube}\vcenter{
\xymatrix{
& P \ar[rr]^{}  \ar@{.>}[dd] && A \ar[dd] \\
P'  \ar[rr]^>>>>>>>>>{}  \ar[dd] \ar[ru] && A' \ar[dd] \ar[ru]\\
& E \ar@{.>}[rr]^<<<<<<<<<{}  && B\\
E' \ar[rr]^{}  \ar@{.>}[ru] && B' \ar[ru]}}
\end{equation}
in $\Reg(\Ac)$ and assume that the bottom morphism is a regular epimorphism: a pushout of regular epimorphisms in $\Ac$.  By taking kernel pairs we obtain a pullback square
\begin{equation}\label{inducedpb}\vcenter{
\xymatrix{
P'\times_P P' \ar[r] \ar[d] & A'\times_AA' \ar[d]\\
E'\times_EE' \ar[r] & B'\times_BB'}}
\end{equation}
in $\Ac$ in which, by condition ($\mathsf{G}$), the bottom morphism is a regular epimorphism. Hence, since $\Ac$ is a regular category, the top morphism is a regular epimorphism as well, and this implies, again by condition ($\mathsf{G}$), that the top side of the cube \eqref{cube} is a pushout of regular epimorphisms, as desired. In fact, it is clear that the following weaker condition is sufficient for $\Reg(\Ac)$ to be regular:
\begin{itemize}
\item[($\mathsf{G}^-$)]
\emph{For any morphism \eqref{Goursat} of exact forks with, moreover, $e$ and $b$ regular epimorphisms, the right hand square is a pushout if and only if $r$ is a pullback-stable epimorphism.}
\end{itemize}
Thus we conclude that if $\Ac$ is a regular category with pushouts of regular epimorphisms by regular epimorphisms, which satisfies the above condition ($\mathsf{G}^-$), then $\Reg(\Ac)$ is regular as well. Then, by Corollary \ref{corollary}, every regular epimorphism in $\Reg(\Ac)$ is an effective descent morphism. In particular, we find that this is the case if $\Ac$ is an exact Goursat category, a result which was already obtained in \cite{Janelidze-Sobral-Goursat}, via a different argument.

\subsection{Ideal determined categories}\label{ExIDT}
Recall from \cite{Janelidze-Marki-Tholen-Ursini} that a pointed finitely complete and finitely cocomplete regular category is \emph{ideal determined} if every regular epimorphism is normal (that is, it is the cokernel of its kernel) and if for any commutative square with $k$ and $e$ regular epimorphisms, and $\kappa$ and $\kappa'$  monomorphisms,
\[
\xymatrix{
K' \ar[r]^{\kappa'} \ar[d]_k & E'  \ar[d]^e\\
K \ar[r]_{\kappa} & E}
\]
if $\kappa'$ is normal (that is, it is the kernel of its cokernel) then $\kappa$ is normal as well. (Note that a pointed variety is ideal determined if it is ideal determined in the sense of \cite{Gumm-Ursini}.) This stability condition for normal monomorphisms is easily seen to be equivalent to the following ``normalised'' version of property ($\mathsf{G}$)---see also \cite{MM-NC}:
\begin{itemize}
\item[($\mathsf{Id}$)] \emph{For any commutative diagram 
\begin{equation}\label{ideal}\vcenter{
\xymatrix{
K' \ar[r] \ar[d]_k & E' \ar[r] \ar[d]^e & B'\ar[d]^b\\
K \ar[r] & E \ar[r] & B}}
\end{equation}
with (short) exact rows (which means that both rows consist of a regular epimorphism together with its kernel) with, moreover, $e$ and $b$ regular epimorphisms, 
the right hand square is a pushout if and only if $k$ is a regular epimorphism.}
\end{itemize}
As with condition ($\mathsf{G}$), the ``if'' part holds more generally: it is true in any pointed category in which every regular epimorphism is normal. Also, it is sufficient that $k$ is an epimorphism in order to conclude that the right hand side square in the diagram is a pushout.

Thus we have that a commutative square of regular epimorphisms in an ideal determined category $\Ac$ is a regular epimorphism in $\Reg(\Ac)$ if and only if the restriction to the kernels is a regular epimorphism. The arguments from the exact Goursat case are then easily adapted (simply take kernels instead of kernel pairs) in order to deduce the regularity of $\Reg(\Ac)$. And again, a weaker condition suffices:

\begin{itemize}
\item[($\mathsf{Id}^-$)] \emph{For any commutative diagram \eqref{ideal} with exact rows with, moreover, $e$ and $b$ regular epimorphisms, the right hand square is a pushout if and only if $k$ is a pullback-stable epimorphism.}
\end{itemize}
Thus we see that if $\Ac$ is a pointed regular category with pushouts of regular epimorphisms by regular epimorphisms, which satisfies the above condition ($\mathsf{Id}^-$), then $\Reg(\Ac)$ is regular as well. Then, by Corollary \ref{corollary}, every regular epimorphism in $\Reg(\Ac)$ is an effective descent morphism, and the same is true for the regular epimorphisms in $\Ac$, by Theorem \ref{new}. In particular, we find that these results hold for an ideal determined category $\Ac$. Note that for an ideal determined category $\Ac$ already the result that every regular epimorphism in $\Ac$ itself is an effective descent morphism is new, as far as we know. Note also that, in this case,  $\Reg(\Ac)$ is equivalent to the category of short exact sequences in $\Ac$, since every regular epimorphism is normal.

\subsection{Topological Mal'tsev algebras}
Recall that an algebraic theory $\mathbb{T}$ is a \emph{Mal'tsev} theory if it contains a ternary operation $p$ satisfying $p(x,y,y)=x$ and $p(x,x,y)=y$. The varieties $\Set^{\mathbb{T}}$ of $\mathbb{T}$-algebras for such theories $\mathbb{T}$ are exactly the congruence permutable ones---such that $RS=SR$ for any two congruences $R$ and $S$ on a same $\mathbb{T}$-algebra---and are often called \emph{Mal'tsev varieties}. Hence, every Mal'tsev variety $\Set^{\mathbb{T}}$ is Goursat. In particular, by Example \ref{ExGoursat},  $\Reg(\Set^{\mathbb{T}})$ is a regular category and every regular epimorphism in $\Reg(\Set^{\mathbb{T}})$ is an effective descent morphism, for any Mal'tsev theory $\mathbb{T}$.

Now, let us replace sets by topological spaces. More precisely, we consider categories $\Top^{\mathbb{T}}$ of topological $\mathbb{T}$-algebras for Mal'tsev theories $\mathbb{T}$. Contrary to the varieties case, the categories $\Top^{\mathbb{T}}$ are not Barr exact. However, they are well known to be regular, since regular epimorphisms are open surjections. Using the fact that the forgetful functor $\Top^{\mathbb{T}}\to \Set^{\mathbb{T}}$ preserves both limits and colimits, and reflects epimorphisms, it is then easily deduced from the Goursat condition ($\mathsf{G}$) for $\Set^{\mathbb{T}}$ that $\Top^{\mathbb{T}}$ satisfies the weaker condition ($\mathsf{G}^-$), for any Mal'tsev theory $\mathbb{T}$. Hence, by Example \ref{ExGoursat}, for a category of topological Mal'tsev algebras $\Top^{\mathbb{T}}$ the category $\Reg(\Top^{\mathbb{T}})$ is regular, and every regular epimorphism in it is an effective descent morphism.


%

\subsection{$n$-Fold regular epimorphisms}
For a category $\Ac$, let us put $\Reg^1\!(\Ac)=\Reg(\Ac)$ and define, inductively, for $n\geq 2$, the categories of \emph{$n$-fold regular epimorphisms} in $\Ac$ by $\Reg^n\!(\Ac)=\Reg(\Reg^{n-1}\!(\Ac))$. In any of the above considered examples, the category $\Reg^n\!(\Ac)$ is not only regular for $n=1$, but for \emph{any} $n\geq 1$. Indeed, if $\Ac$ is a regular category which admits pushouts of regular epimorphisms by regular epimorphisms and satisfies condition $(\mathsf{G}^-)$, then $\Reg(\Ac)$ has the same properties: the regularity follows from Example \ref{ExGoursat}, pushouts of regular epimorphisms by regular epimorphisms are degreewise pushouts in $\Ac$, and condition $(\mathsf{G}^-)$ follows from the corresponding condition on $\Ac$. Indeed, for the latter, recall that the ``if'' part of $(\mathsf{G}^-)$ is true in an arbitrary category, and notice that  a morphism
\begin{equation}\label{square}\vcenter{
\xymatrix{
E' \ar[d] \ar[r]^{p'} & B' \ar[d]\\
E \ar[r]_p \ar[r] & B}}
\end{equation}
in $\Reg(\Ac)$ is a pullback-stable epimorphism as soon as $p'$ is a pullback-stable epimorphism in $\Ac$.

Similarly, if $\Ac$ is a regular category in which every regular epimorphism is normal, which admits pushouts of regular epimorphisms by regular epimorphisms and satisfies condition $(\mathsf{Id}^-)$, then $\Reg(\Ac)$ has the same properties. It follows, in both cases, that the category $\Reg^n\!(\Ac)$ is regular for any $n\geq 1$.

In fact, we have the following, more general, property:
\begin{proposition}\label{goesup}
If $\Ac$ is a regular category such that also $\Reg(\Ac)$ is regular, then, for any $n\geq 2$, $\Reg^n\!(\Ac)$ is regular as well.
\end{proposition}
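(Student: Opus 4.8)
The plan is to isolate a self-propagating property and induct. Write $\mathsf{P}(\Cc)$ for the conjunction ``$\Cc$ is regular and $\Reg(\Cc)$ is regular''. The hypothesis of the proposition is precisely $\mathsf{P}(\Ac)$, so it suffices to prove the single implication $\mathsf{P}(\Cc)\Rightarrow \mathsf{P}(\Reg(\Cc))$: granting it, one obtains $\mathsf{P}(\Reg^{k}(\Ac))$ for every $k\ge 0$ by induction, and reading off the second conjunct shows $\Reg^{n}(\Ac)$ to be regular for all $n\ge 1$, in particular for $n\ge 2$. Since the first conjunct of $\mathsf{P}(\Reg(\Cc))$ is already assumed, the whole matter reduces to the \emph{crux}: if $\Cc$ and $\Reg(\Cc)$ are regular, then $\Reg^{2}(\Cc)=\Reg(\Reg(\Cc))$ is regular.

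To prove the crux I would set $\Dc:=\Reg(\Cc)$, which is regular by hypothesis, and verify the three regularity axioms for $\Reg(\Dc)$. Finite completeness is automatic, exactly as recalled for $\Reg(\Ac)$ in Section~2: limits in $\Reg(\Dc)$ are the regular-epimorphism parts of degreewise pullbacks, and these exist because $\Dc$ is regular. For coequalisers of effective equivalence relations I would invoke the equivalence, also recalled there, between their existence in $\Reg(\Dc)$ and the existence in $\Dc$ of pushouts of regular epimorphisms by regular epimorphisms. The latter holds: the regularity of $\Reg(\Cc)$ provides coequalisers of effective equivalence relations, hence, by the same equivalence applied to $\Cc$, pushouts of regular epimorphisms by regular epimorphisms in $\Cc$; and these lift degreewise to $\Dc=\Reg(\Cc)$, just as for the $n$-fold examples.

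The remaining axiom, pullback-stability of regular epimorphisms in $\Reg(\Dc)$, is the main obstacle. A regular epimorphism in $\Reg(\Dc)$ is a pushout square of regular epimorphisms in $\Dc$, while a pullback is the regular-epimorphism part of a degreewise pullback in $\Dc$; since $\Dc=\Reg(\Cc)$, both descriptions unwind to constructions in $\Cc$. My strategy is to take an arbitrary pullback in $\Reg(\Dc)$ of such a regular epimorphism, display it as a commutative cube over $\Cc$, and check, degree by degree and in each of the two ``directions'' coming from the two $\Reg$-layers, that the resulting squares are pullbacks of forks to which Lemma~\ref{George} applies. This is legitimate precisely because the hypotheses of Lemma~\ref{George} are satisfied for $\Cc$ --- both $\Cc$ and $\Reg(\Cc)$ are regular --- and it is what lets me deduce the outer pushout from the inner coequalisers. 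I stress that one cannot shortcut this by applying Lemma~\ref{George} with $\Dc$ in place of $\Cc$, since that would presuppose $\Reg(\Dc)$ to be regular, which is the very conclusion sought.

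The technical heart, and the step I expect to be hardest, is this degreewise descent. Because pullbacks in $\Dc=\Reg(\Cc)$ are only the regular-epimorphism parts of degreewise pullbacks in $\Cc$, rather than honest degreewise pullbacks, the two directions of the cube interact, and one must check that forming this regular-epimorphism correction is compatible with passing to kernel pairs and coequalisers. Concretely, I would verify that a pullback cube in $\Reg(\Dc)$ really does restrict, in each degree, to the situation of Lemma~\ref{George} in $\Cc$, so that the jointly-monic and effectiveness conditions can be recombined in the manner of the proof of Theorem~\ref{maintheorem}. Once pullback-stability is established, $\Reg(\Dc)=\Reg^{2}(\Cc)$ satisfies all three axioms and is regular, which closes the induction.
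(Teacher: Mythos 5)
Your reduction to the case $n=2$ (via the self-propagating property $\mathsf{P}$, which is exactly the paper's ``it suffices to prove that $\Reg^2\!(\Ac)$ is regular''), and your treatment of finite completeness and of coequalisers of effective equivalence relations, all match the paper and are fine. But the heart of the proposition --- pullback-stability of regular epimorphisms in $\Reg^{2}(\Cc)=\Reg(\Dc)$ --- is never actually proved in your text: it is announced as a plan, and you yourself flag the decisive difficulty (the two $\Reg$-layers interact, and the regular-epi-part correction must be checked compatible with kernel pairs and coequalisers) without resolving it. Moreover, the tool you propose is doubtful. Lemma \ref{George} requires the side squares to be honest pullbacks in $\Cc$ and the parallel pair to underlie a reflexive graph; but pullbacks and kernel pairs in $\Reg(\Dc)$ are only the regular-epimorphism parts of degreewise constructions, so after the image correction the degreewise squares are in general no longer pullbacks in $\Cc$, and nothing in your sketch shows the hypotheses of Lemma \ref{George} are ever met. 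The appeal to recombining conditions ``in the manner of the proof of Theorem \ref{maintheorem}'' does not help either: that argument concerns joint monicity in a discrete fibration, which is a different issue from stability of pushout squares. So there is a genuine gap, located exactly at the step you identified as hardest; also note that in the paper Lemma \ref{George} serves Theorem \ref{new}, not this proposition.

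The paper closes the gap with a short argument you missed: consider the functor $\dom\colon \Reg^{2}(\Cc)\to \Reg(\Cc)$ sending a double regular epimorphism to its domain. Since the limit construction in $\Reg(\Dc)$ (regular-epi part of the regular epi-mono factorisation of the degreewise pullback) leaves the domain object untouched, $\dom$ preserves pullbacks; and from the description of a regular epimorphism in $\Reg^{2}(\Cc)$ as a commutative cube of regular epimorphisms in $\Cc$ each of whose sides is a pushout, $\dom$ both preserves and reflects regular epimorphisms. Hence a pullback of a regular epimorphism in $\Reg^{2}(\Cc)$ is sent by $\dom$ to a pullback of a regular epimorphism in the regular category $\Reg(\Cc)$, hence to a regular epimorphism, and reflection along $\dom$ finishes the proof --- no degreewise descent into $\Cc$ and no appeal to Lemma \ref{George} is needed. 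If you want to salvage your outline, the efficient fix is to replace your entire third step by this transfer along $\dom$.
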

\begin{proof}
Of course, it suffices to prove that $\Reg^2\!(\Ac)$ is regular.

First of all note that $\Reg^2\!(\Ac)$ has coequalisers of effective equivalence relations, since $\Ac$, hence also $\Reg(\Ac)$, has pushouts of regular epimorphisms by regular epimorphisms. To see that $\Reg^2\!(\Ac)$ has pullback-stable regular epimorphisms, consider the functor  
\[
\dom\colon \Reg^2\!(\Ac)\to \Reg(\Ac)
\]
which sends a double regular epimorphism $(e\colon E'\to E)\to (b\colon B'\to B)$ to its domain $e$. Notice that $\dom$ preserves pullbacks. Moreover, $\dom$ preserves and reflects regular epimorphisms, as easily follows from the fact that a regular epimorphism in $\Reg^2\!(\Ac)$ is the same as a commutative cube in $\Ac$ of regular epimorphisms such that each of the sides is a pushout. Hence, the regularity of $\Reg^2\!(\Ac)$ follows from that of $\Reg(\Ac)$. 
\end{proof}

Combining the above proposition and Corollary \ref{corollary}, we find that if $\Ac$ is a regular category such that $\Reg(\Ac)$ is regular as well, then we have for any $n\geq 1$ that $\Reg^n\!(\Ac)$ is regular, and that every regular epimorphism in $\Reg^n\!(\Ac)$ is an effective descent morphism.

Remark that when $\Ac$ is an exact Mal'tsev category ($\Ac$ is exact and $RS=SR$ for any two equivalence relations $R$ and $S$ on a same object of $\Ac$) then by a result in \cite{Carboni-Kelly-Pedicchio} a pushout of regular epimorphisms is the same as a \emph{double extension} (a notion from ``higher dimensional'' Galois theory---see, for instance, \cite{Janelidze:Double,EGVdL}): a commutative square \eqref{square} of regular epimorphisms, such that the factorisation  $E'\to E\times_BB'$ to the pullback is a regular epimorphism as well. 

Notice that Proposition \ref{goesup} together with Theorem \ref{new}  provide an alternative proof for Corollary \ref{corollary}.



\section{Remarks}
\subsection{} The relations between the various conditions considered above---conditions ($\mathsf{G}$), ($\mathsf{G}^-$), ($\mathsf{Id}$), ($\mathsf{Id}^-$) and the condition that $\Reg(\Ac)$ is a regular category---are still to be better understood. However, we do know the following: 

-Any of these conditions holds in any semi-abelian category \cite{Janelidze-Marki-Tholen}. Hence, we obtain as examples any variety of groups, rings, Lie algebras, and, more generally, any variety of $\Omega$-groups; the variety of loops; the variety of Heyting semi-lattices;  any abelian category; \dots.   

-There exist ideal determined varieties that are not $3$-permutable (see \cite{Barbour-Raftery}), hence ($\mathsf{Id}$) does not imply ($\mathsf{G}$). In particular, this means that $3$-permutability is not a necessary condition on a variety $\Ac$ in order for every regular epimorphism in  $\Reg(\Ac)$ to be an effective descent morphism. This answers a question posed in \cite{Janelidze-Sobral-Goursat}. 

-The examples of topological Mal'tsev algebras and of $n$-fold regular epimorphisms show that the weaker conditions ($\mathsf{G}^-$) and ($\mathsf{Id}^-$) are strictly weaker than  ($\mathsf{G}$) and ($\mathsf{Id}$).

Let us also remark that we do not know, at present, any example of a regular category whose category of regular epimorphisms is regular, which does not satisfy either condition ($\mathsf{G}^-$) or ($\mathsf{Id}^-$).

%
%

\subsection{}\label{admissibility}
As mentioned earlier,  Lemma \ref{George} precisely says the following, for $\Ac$ a regular category such that $\Reg(\Ac)$ is regular as well:

\emph{For any morphism $p\colon E\to B$ in $\Ac$,  the  ``change of base''  functor $p^*\colon (\Ac\downarrow B)\to (\Ac\downarrow E)$ preserves coequalisers of reflexive graphs.}

Here's another way of reformulating this same property. Recall from \cite{CHK} that a reflector $I\colon \Ac\to\Xc$ into a full subcategory $\Xc$ of a category $\Ac$ is \emph{semi-left exact} if it preserves any pullback square
\[
\xymatrix{
P \ar@{}[rd]|<<{\pullback}\ar[r] \ar[d] & X \ar[d]\\
B \ar[r]_-{\eta_B} & I(B)}
\]
of a unit $\eta_B$ along a morphism in $\Xc$. Clearly, this means that, for every such pullback square, the morphism $P\to X$ coincides, up to isomorphism, with the unit $\eta_P\colon P\to I(P)$. Note that semi-left exactness is the same as \emph{admissibility} in the sense of categorical Galois theory (see \cite{CJKP}).

Now, let $\Ac$ be a regular category such that $\Reg(\Ac)$ is regular as well, and $\RG(\Ac)$ the category of reflexive graphs in $\Ac$.
If we assume, moreover, that $\Ac$ admits coequalisers of reflexive graphs, then Lemma \ref{George} can also be reformulated as follows:

\emph{The functor $\pi_0\colon \RG(\Ac)\to \Ac$, which sends a reflexive graph to its coequaliser, is semi-left exact.}

\subsection{}
For an object $B$ of a category $\Ac$, write $\Pt(B)$ for the category defined as follows: an object of $\Pt(B)$ is a triple $(A,f,s)$, where $A$ is an object of $\Ac$ and $f\colon A\to B$ and $s\colon B\to A$ are morphisms in $\Ac$ such that $f\circ s=1_B$; a morphism $(A,f,s)\to (C,g,t)$ in $\Pt(B)$ is a morphism $h\colon A\to C$ in $\Ac$ such that $g\circ h=f$ and $h\circ s=t$. 

When $\Ac$ has pullbacks of split epimorphisms, any morphism $p\colon E\to B$ in $\Ac$ induces a ``change of base'' functor $p^*\colon \Pt(B)\to \Pt(E)$ given by pulling back along $p$. If, moreover,  $\Ac$ admits pushouts of split monomorphisms, then any such $p^*$ has a left adjoint (see \cite{Bourn-Janelidze:Semidirect}). Recall from \cite{Bourn1991} that $\Ac$ is called \emph{protomodular} if $p^*$ reflects isomorphisms for every morphism $p$.

Now, let $\Ac$ be a regular category such that $\Reg(\Ac)$ is regular as well, and assume that $\Ac$ admits coequalisers of reflexive graphs. Then,  for any object $B$ of $\Ac$, coequalisers of reflexive graphs in $\Pt(B)$ are necessarily coequalisers in $\Ac$,  and we can use Lemma \ref{George} to prove that they are preserved by $p^*\colon \Pt(B)\to \Pt(E)$, for every morphism $p\colon E\to B$ in $\Ac$. Hence, using the ``reflexive form'' of Beck's Monadicity Theorem we find, for any regular protomodular  category $\Ac$ with coequalisers of reflexive graphs and pushouts of split monomorphisms such that  $\Reg(\Ac)$ is regular as well,  that the functor $p^*\colon \Pt(B)\to \Pt(E)$ is monadic for any morphism $p\colon E\to B$ in $\Ac$. This means, according to  \cite{Bourn-Janelidze:Semidirect},  that $\Ac$  is a \emph{category with semidirect products}.

\subsection{}
Let $\Ac$ be a regular category and $\Mon(\Ac)$ the full subcategory of the arrow category $\Ac^2$ with as objects all monomorphisms in $\Ac$.  Like $\Ac$, the category $\Mon(\Ac)$ is finitely complete:  limits in $\Mon(\Ac)$ are degreewise limits in $\Ac$. Colimits, if they exist, are given by the mono part of the regular epi-mono factorisation of the degreewise colimit. A regular epimorphism in $\Mon(\Ac)$ is the same as a degreewise regular epimorphism in $\Ac$. In particular, we have that  $\Mon(\Ac)$ is a regular category. 

Now let  $\cod\colon \Mon(\Ac)\to\Ac$ be the functor which sends a monomorphism $a\colon A'\to A$ to its codomain $A$. Then $\cod$ preserves pullbacks, pushouts and regular epi-mono factorisations, and reflects pushouts of regular epimorphisms in the following sense: any commutative square of regular epimorphisms in $\Mon(\Ac)$ that is sent to a pushout in $\Ac$ is necessarily a pushout itself. It follows that whenever $\Reg(\Ac)$ is regular, $\Reg(\Mon(\Ac))$ is regular as well. Hence, categories of monomorphisms provide another class of examples of regular categories whose category of regular epimorphisms is regular as well. In particular, we find that $\Reg(\Mon(\Ac))$ is regular for any semi-abelian category $\Ac$. In this case $\Mon(\Ac)$ is also protomodular and finitely complete, so that by the previous remark $\Mon(\Ac)$ admits semi-direct products, a result which was already obtained in \cite{Ferreira-Sobral}.

\subsection{} Let $\Ac$ be a regular category. By an \emph{extension} we simply mean a regular epimorphism in $\Ac$; a \emph{double extension} is, as recalled above, a commutative square of extensions such that the induced factorisation to the pullback is an extension as well. Let us write $\Ext(\Ac)$ and $\Ext^2\!(\Ac)$ for the categories of extensions and of double extensions, respectively (where we have that $\Ext(\Ac)=\Reg(\Ac)$, of course). One can then, inductively, define \emph{$n$-fold extensions} for $n\geq 3$, as those commutative squares of ($n-1$)-fold extensions such that the induced factorisation to the pullback (in the category $\Ext^{n-2}\!(\Ac)$) is an  ($n-1$)-fold extension.  It was noted in \cite{EGVdL} that every $n$-fold extension (for $n\geq 2$) is an effective descent morphism in $\Ext^{n-1}\!(\Ac)$, basically because pullbacks along $n$-fold extensions are computed degreewise in the exact category $\Ac$.

As mentioned earlier, if $\Ac$ is an exact Mal'tsev category, then a regular epimorphism in $\Ext(\Ac)=\Reg(\Ac)$ is the same as a double extension, by a result in \cite{Carboni-Kelly-Pedicchio}. In fact, it is shown in \cite{Carboni-Kelly-Pedicchio} that this property characterises the exact Mal'tsev categories $\Ac$ among the regular ones. Hence, we have, in $\Ac$, that the effective descent morphisms are exactly the extensions, since $\Ac$ is exact, and, in $\Ext(\Ac)$, exactly the double extensions, by Theorem \ref{maintheorem} and the result in  \cite{Carboni-Kelly-Pedicchio}. This might lead one to expect that the effective descent morphisms in $\Ext^2(\Ac)$ are exactly the three-fold extensions. However, this turns out not to be the case, in general, and happens only when the category $\Ac$ is \emph{arithmetical} in the sense of \cite{Pedicchio2} (for instance, when $\Ac$ is the variety of Boolean rings, or of Heyting algebras). Indeed, by Theorem \ref{maintheorem}, the effective descent morphisms in $\Ext^2(\Ac)$ are exactly the pushout squares of regular epimorphisms in $\Ext(\Ac)$, so that, by the above mentioned result in \cite{Carboni-Kelly-Pedicchio}, we would have that $\Ext(\Ac)$ is exact Mal'tsev which, by a result in \cite{Bourn2001b} is the case if and only if $\Ac$ is an arithmetical category.
\\ \\
\emph{Acknowledgement.} Many thanks to George Janelidze for pointing out that in the assumptions of Theorem \ref{maintheorem} the ``almost exactness'' of the category $\Ac$ is superfluous.

%
%

\providecommand{\bysame}{\leavevmode\hbox to3em{\hrulefill}\thinspace}
\providecommand{\MR}{\relax\ifhmode\unskip\space\fi MR }
\providecommand{\MRhref}[2]{%
  \href{http://www.ams.org/mathscinet-getitem?mr=#1}{#2}
}
\providecommand{\href}[2]{#2}


\begin{thebibliography}{10}



\bibitem{Barbour-Raftery}
G.~D. Barbour and J.~G. Raftery, \emph{Ideal determined varieties have
  unbounded degrees of permutability}, Quaest.~Math. \textbf{20} (1997),
  563--568.

\bibitem{Benabou-Roubaud}
J.~B{\'e}nabou and J.~Roubaud, \emph{Monades et descente}, C.R. Acad. Sc.
  \textbf{270} (1970), 96--98.

\bibitem{Bourn1991}
D.~Bourn, \emph{Normalization equivalence, kernel equivalence and affine
  categories}, Category {T}heory, {P}roceedings {C}omo 1990 (A.~Carboni, M.~C.
  Pedicchio, and G.~Rosolini, eds.), Lecture Notes in Math., vol. 1488,
  Springer, 1991, pp.~43--62.

\bibitem{Bourn2001b}
D.~Bourn, \emph{A categorical genealogy for the congruence distributive
  property}, Theory Appl. Categ. \textbf{8} (2001), no.~14, 391--407.

\bibitem{Bourn-Janelidze:Semidirect}
D.~Bourn and G.~Janelidze, \emph{Protomodularity, descent, and semidirect
  products}, Theory Appl. Categ. \textbf{4} (1998), no.~2, 37--46.

\bibitem{Carboni-Kelly-Pedicchio}
A.~Carboni, G.~M. Kelly, and M.~C. Pedicchio, \emph{Some remarks on {M}altsev
  and {G}oursat categories}, Appl. Categ. Structures \textbf{1} (1993),
  385--421.

\bibitem{CHK} C.~Cassidy, M.~H\'ebert and G.~M. Kelly, \emph{Reflective subcategories, localizations and factorizations systems}, J. Austral. Math. Soc. \textbf{38} (1985), 287-329.

\bibitem{CJKP} A. Carboni, G. Janelidze, G. M. Kelly, and R. Par\'e, \emph{On localization and stabilization of factorization systems}, Appl. Categ. Structures 5, 1-58 (1997)

\bibitem{EGVdL}
T.~Everaert, M.~Gran, and T.~Van~der Linden, \emph{Higher {H}opf formulae for
  homology via {G}alois {T}heory}, Adv.~Math. \textbf{217} (2008), 2231--2267.


\bibitem{Gumm-Ursini}
H.~P. Gumm and A.~Ursini, \emph{Ideals in universal algebra}, Algebra
  Universalis \textbf{19} (1984), 45--54.

\bibitem{Janelidze:Double}
G.~Janelidze, \emph{What is a double central extension? ({T}he question was
  asked by {R}onald {B}rown)}, Cah. Topol. G{\'e}om. Differ. Cat{\'e}g.
  \textbf{XXXII} (1991), no.~3, 191--201.

\bibitem{Janelidze-Marki-Tholen}
G.~Janelidze, L.~M{\'a}rki, and W.~Tholen, \emph{Semi-abelian categories},
  J.~Pure Appl. Algebra \textbf{168} (2002), 367--386.

\bibitem{Janelidze-Marki-Tholen-Ursini}
G.~Janelidze, L.~M{\'a}rki, W.~Tholen, and A.~Ursini, \emph{Ideal determined
  categories}, Cah. Topol. G\a'eom. Diff\a'er. Cat\a'eg. \textbf{LI-2} (2010),
  115--125.

\bibitem{Janelidze-Sobral-Goursat}
G.~Janelidze and M.~Sobral, \emph{Descent for regular epimorphisms in {B}arr
  exact {G}oursat categories}, Appl.\ Categ.\ Structures, published online 13th
  May 2009.

\bibitem{Janelidze-Sobral-Tholen}
G.~Janelidze, M.~Sobral, and W.~Tholen, \emph{Beyond {B}arr exactness:
  Effective descent morphisms}, {C}ategorical Foundations: Special Topics in
  Order, Topology, Algebra and Sheaf Theory (M.~C. Pedicchio and W.~Tholen,
  eds.), Encyclopedia of Math. Appl., vol.~97, Cambridge University Press,
  2004, pp.~359--405.

\bibitem{Janelidze-Tholen2}
G.~Janelidze and W.~Tholen, \emph{Facets of descent {II}}, Appl. Categ.
  Structures \textbf{5} (1997), 229--248.

\bibitem{MM-NC}
S.~Mantovani and G.~Metere, \emph{Normalities and commutators}, J.~Algebra
  \textbf{324} (2010), no.~9, 2568--2588.

\bibitem{Ferreira-Sobral}
N.~Martins-Ferreira and M.~Sobral, \emph{On categories with semidirect products} (2010, preprint).



\bibitem{Pedicchio2}
M.~C. Pedicchio, \emph{Arithmetical categories and commutator theory}, Appl.
  Categ. Structures \textbf{4} (1996), no.~2--3, 297--305.

\end{thebibliography}
\end{document}